\begin{document}
\begin{frontmatter}\vspace*{0in}
\title{Asymptotic Stability of  the Landau--Lifshitz Equation}
\author{Amenda Chow}\address{Department of Mathematics and Statistics, York University, Canada\\ amchow@yorku.ca}
 
 \begin{keyword}                           
Asymptotic stability, Equilibrium Points, Hysteresis, Lyapunov function, Nonlinear control systems, Partial differential equations            
\end{keyword}            
          
\begin{abstract}
The Landau--Lifshitz equation describes the behaviour of magnetic domains in ferromagnetic structures. Recently such structures have been found to be favourable for storing digital data. Stability of magnetic domains is important for this. Consequently, asymptotic stability of the equilibrium points in the Landau--Lifshitz equation are established.  A suitable Lyapunov function is presented. 
\end{abstract}

\end{frontmatter}

\section{Introduction}
The Landau--Lifshitz equation is a coupled set of nonlinear partial differential equations. One of its first appearances is in a 1935 paper \cite{Landau1935}, in which this equation describes the behaviour of magnetic domains within a ferromagnetic structure. In recent applications, structures such as ferromagnetic nanowires have appeared in electronic devices used for storing digital information \cite{Parkin2008}.  In particular, data is encoded as a specific pattern of magnetic domains within a ferromagnetic nanowire. Consequently the Landau--Lifshitz equation continues to be widely explored, and its stability is of growing interest \cite{CarbouGilles2010,CarbouLabbe2006,Carbou2011,Chow2016,Gou2011,GuoBook2008,Guo2004,Jizzini2011,Labbe2012,Mayergoyz2010,Zhai1998}. Stability of equilibrium points is also related to hysteresis \cite{Chow2014_ACC,Morris2011}, and investigating stability lends insights into the hystereric behaviour that appears in the Landau--Lifshitz equation \cite{CarbouEfendiev2009,Chow2014_ACC,Wiele2007,Visintin1997}. 

Stability results are often based on linearization \cite{CarbouLabbe2006,Gou2011,Labbe2012}; however, in the proceeding discussion, asymptotic stability of the Landau-Lifshitz equation is established using Lyapunov theory. This is preferred because linearization leads only to an approximation.

The difficulty with Lyapunov theory is often in the construction of an appropriate Lyapunov function.  Working in infinite-dimensions makes this more difficult; however, Lyapunov functions have been found for the Landau--Lifshitz equation \cite{Chow2016,GuoBook2008}.  In both these works, a Lyapunov function establishes that the equilibrium points of the Landau-Lifshitz equation are stable. The work in \cite{Chow2016} is extended here and asymptotic stability is shown. In particular, a nonlinear control is shown to steer the system to an asymptotically stable equilibrium point.  Control of the Landau--Lifshitz equation is crucial as this means the behaviour of the domain walls, which contains the encoded data, can be fully determined \cite{Parkin2008}. 

The control objective is to steer the system dynamics to any arbitrary equilibrium point, which requires asymptotic stability. This is presented in Theorem~\ref{thmasymstab}, which is the main result and can be found in Section~\ref{secsymstab}.  A summary and future avenues are in the last section. To begin, a brief mathematical review of the Landau--Lifshitz equation is discussed next.

\section{Landau-Lifshitz Equation}
Consider a one dimensional ferromagnetic nanowire of length $L>0$. Let   $\mathbf m(x,t)=(m_1(x,t),m_2(x,t),m_3(x,t))$ be the magnetization of the ferromagnetic nanowire for some position $x \in [0,L]$  and time $t \geq 0 $ with initial magnetization $\mathbf m(x,0)=\mathbf m_0(x)$.  These dynamics are determined by  
\begin{align}
&\frac{\partial \mathbf m}{\partial t}  = \mathbf m \times \left( \mathbf m_{xx}+\mathbf u\right)-\nu\mathbf m\times\left(\mathbf m\times \left(\mathbf m_{xx}+\mathbf u\right)\right) \label{eqcontrolledLLphysical}\\
& \mathbf m_x(0,t)=  \mathbf m_x(L,t)=\mathbf 0, \label{eqboundarycondition}\\
&  || \mathbf m(x,t)||_{2} =1.\label{eqconstraint}
\end{align}
where $\times$ denotes the cross product and $||\cdot||_{2}$ is the Euclidean norm. Equation~(\ref{eqcontrolledLLphysical}) is the one--dimensional controlled Landau--Lifshitz equation \cite{Bertotti2009,Chow2014_ACC,Chow2016,Gilbert2004,GuoBook2008}.  It satisfies the constraint in (\ref{eqconstraint}), which means the magnitude of the magnetization is uniform at every point of the ferromagnet. The exchange energy is $\mathbf m_{xx}$. Mathematically, $\mathbf m_{xx}$ denotes magnetization differentiated with respect to $x$ twice. The parameter $\nu \geq  0$ is the damping parameter, which depends on the type of ferromagnet. The applied magnetic field, denoted $\mathbf u(t)$, acts as the control, and hence, when $\mathbf u(t)=\mathbf 0$, equation~(\ref{eqcontrolledLLphysical})  can be thought of as the uncontrolled Landau--Lifshitz equation. Neumann boundary conditions  (\ref{eqboundarycondition}) are used here. 

Existence and uniqueness results  can be found in \cite{Alouges1992,Carbou2001,Chow2013_thesis,Chow2016} and references therein.  Solutions to (\ref{eqcontrolledLLphysical}) are defined on $\mathcal L_2^3 = \mathcal L_2 ([0,L]; \mathbb R^3)$ with the usual inner--product and norm with domain
\begin{align*}
D=\{ & \mathbf m\in \mathcal L_2^3 :  \mathbf m_x \in \mathcal L_2^3, \, \\
& \mathbf m_{xx} \in \mathcal L_2^3,  \mathbf m_x(0)=\mathbf m_x(L) = \mathbf 0    \}.  
\end{align*}
The notation $\|\cdot\|_{\mathcal L_2^3}$ is used for the norm.

\section{Asymptotic Stability}\label{secsymstab}
For $\mathbf u(t)=\mathbf 0$, the set of equilibrium points is
\begin{align}\label{equilibriumset}
E=&\{\mathbf a=(a_1,a_2,a_3) : a_1,a_2,a_3\nonumber\\
& \mbox{ constants and }||\mathbf a||_2=1 \},
\end{align} 
which satisfies the boundary conditions in (\ref{eqboundarycondition}) \cite[Theorem~6.1.1]{GuoBook2008}.  It is clear $E$ contains an infinite number of equilibria. A particular $\mathbf a \in E$ is stable but not asymptotically stable \cite[Proposition~6.2.1]{GuoBook2008}; however, the set $E$ is asymptotically stable in the $\mathcal L_2^3$--norm \cite{Chow2013_thesis,Chow2016}. 

Let $\mathbf r$ be an arbitrary equilibrium point of $E$ with $r_1\neq 0$ to ensure $||\mathbf r||_2=1$, that is,  (\ref{eqconstraint}), is  satisfied. Define the control in (\ref{eqcontrolledLLphysical}) to be  
\begin{equation}\label{equ}
\mathbf u=k(\mathbf r -\mathbf m )
\end{equation}
where  $k$ is a scalar constant. This is the same control used in \cite{Chow2016}.  
For this control, $\mathbf r$ is an equilibrium point of the controlled Landau-Lifshitz equation (\ref{eqcontrolledLLphysical}). It is shown in Theorem~\ref{thmasymstab} that $\mathbf r$ is locally asymptotically stable, which is the main result.   Simulations demonstrating asymptotic stability of the Landau-Lifshitz equation given the control in (\ref{equ}) are shown in \cite{Chow2016}.

The following lemmas are needed in Theorem~\ref{thmasymstab}. Lemmas~\ref{thmderivativemcrossmprime} and~\ref{lemmazerointegral} appear in \cite{Chow2016}, which can be obtained from the product rule and integration by parts, respectively. 
\begin{lem}\label{thmderivativemcrossmprime}
For $\mathbf m\in \mathcal L_2^3$, the derivative of $\mathbf g=\mathbf m \times \mathbf m_x$ is $\mathbf g_x=\mathbf m \times \mathbf m_{xx}$.
\end{lem}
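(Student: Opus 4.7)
The plan is to apply the product rule for the cross product, treating $\mathbf g = \mathbf m \times \mathbf m_x$ componentwise. Recall that for two differentiable vector-valued functions $\mathbf a, \mathbf b$ of $x$, one has $(\mathbf a \times \mathbf b)_x = \mathbf a_x \times \mathbf b + \mathbf a \times \mathbf b_x$, which follows directly from the product rule applied to each of the three components of the cross product in terms of $a_i b_j$.

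First, I would take $\mathbf a = \mathbf m$ and $\mathbf b = \mathbf m_x$, so that
\begin{equation*}
\mathbf g_x = (\mathbf m \times \mathbf m_x)_x = \mathbf m_x \times \mathbf m_x + \mathbf m \times \mathbf m_{xx}.
\end{equation*}
Next, I would observe that the cross product of any vector with itself vanishes, so $\mathbf m_x \times \mathbf m_x = \mathbf 0$. Substituting gives $\mathbf g_x = \mathbf m \times \mathbf m_{xx}$, as claimed.

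There is no real obstacle here; the only subtlety worth acknowledging is that the identity is to be understood in the weak/distributional sense under the regularity hypotheses implicit in the statement (namely that $\mathbf m$ and its relevant derivatives lie in $\mathcal L_2^3$, as specified by the domain $D$), so the product rule holds almost everywhere. Thus the argument is essentially a one-line application of the Leibniz rule combined with the antisymmetry of the cross product.
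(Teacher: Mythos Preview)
Your argument is correct and matches the paper's own justification, which simply states that the lemma follows from the product rule. Your additional remark that $\mathbf m_x \times \mathbf m_x = \mathbf 0$ makes explicit the only nontrivial step.
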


\begin{lem}\label{lemmazerointegral}
For $\mathbf m \in \mathcal L_2^3$ satisfying (\ref{eqboundarycondition}), 
\[
\int_0^L (\mathbf m-\mathbf r)^{\mathrm T}(\mathbf m \times \mathbf m_{xx})dx=0.
\]
\end{lem}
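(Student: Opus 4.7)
The plan is to combine Lemma~\ref{thmderivativemcrossmprime} with a single integration by parts and then exploit the boundary conditions together with the vanishing of a scalar triple product. First I would rewrite the integrand using Lemma~\ref{thmderivativemcrossmprime} to recognize $\mathbf m \times \mathbf m_{xx}$ as the derivative of $\mathbf g = \mathbf m \times \mathbf m_x$, so that
\[
\int_0^L (\mathbf m - \mathbf r)^{\mathrm T}(\mathbf m \times \mathbf m_{xx})\,dx = \int_0^L (\mathbf m - \mathbf r)^{\mathrm T}\mathbf g_x\,dx.
\]

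Next I would integrate by parts. Because $\mathbf r$ is constant, $(\mathbf m - \mathbf r)_x = \mathbf m_x$, and the boundary term is
\[
\left[(\mathbf m - \mathbf r)^{\mathrm T}(\mathbf m \times \mathbf m_x)\right]_0^L,
\]
which vanishes by the Neumann condition~(\ref{eqboundarycondition}) since $\mathbf m_x(0,t) = \mathbf m_x(L,t) = \mathbf 0$. What remains is
\[
-\int_0^L \mathbf m_x^{\mathrm T}(\mathbf m \times \mathbf m_x)\,dx.
\]
This integrand is a scalar triple product with a repeated factor $\mathbf m_x$, hence identically zero pointwise, giving the claim.

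The main obstacle is essentially bookkeeping: I need to make sure that differentiating $(\mathbf m - \mathbf r)^{\mathrm T}$ really does give $\mathbf m_x^{\mathrm T}$ (so that $\mathbf r$ disappears cleanly), and that the boundary term is well-defined, which it is under the hypothesis $\mathbf m \in D$ so that $\mathbf m_x$ is meaningful at the endpoints and both $\mathbf m_x(0)$ and $\mathbf m_x(L)$ equal $\mathbf 0$. No genuine analytical difficulty arises; the result is a two-line calculation once Lemma~\ref{thmderivativemcrossmprime} is invoked.
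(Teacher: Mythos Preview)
Your proposal is correct and is exactly the approach the paper indicates: the paper does not write out a proof but states that the lemma follows from integration by parts (together with the product rule of Lemma~\ref{thmderivativemcrossmprime}), and that is precisely what you carry out.
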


\begin{lem}\label{lemupperbound1}
If $\mathbf r \in E$ and $\mathbf m$ satisfies (\ref{eqconstraint}), then $||\mathbf m \times \mathbf r||_2\leq1 $.
\end{lem}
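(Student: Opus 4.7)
The plan is to prove the bound pointwise at each $x \in [0,L]$ using Lagrange's identity for the cross product in $\mathbb R^3$,
\[
\|\mathbf a \times \mathbf b\|_2^2 = \|\mathbf a\|_2^2 \|\mathbf b\|_2^2 - (\mathbf a^{\mathrm T}\mathbf b)^2,
\]
which holds for any $\mathbf a, \mathbf b \in \mathbb R^3$. The whole content of the lemma will then reduce to exploiting the unit-norm properties of $\mathbf m$ and $\mathbf r$.

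First I would fix an arbitrary $x \in [0,L]$ and $t\geq 0$ and apply Lagrange's identity to $\mathbf a = \mathbf m(x,t)$ and $\mathbf b = \mathbf r$. Next I would invoke the constraint (\ref{eqconstraint}) to replace $\|\mathbf m(x,t)\|_2$ by $1$, and use the fact that $\mathbf r \in E$, together with the defining property of $E$ in (\ref{equilibriumset}), to replace $\|\mathbf r\|_2$ by $1$. Substituting both values into Lagrange's identity gives
\[
\|\mathbf m \times \mathbf r\|_2^2 = 1 - (\mathbf m^{\mathrm T}\mathbf r)^2,
\]
and since $(\mathbf m^{\mathrm T}\mathbf r)^2 \geq 0$, taking square roots yields $\|\mathbf m \times \mathbf r\|_2 \leq 1$, as desired.

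There is no real obstacle here: the statement is a direct geometric consequence of the fact that $\mathbf m(x,t)$ and $\mathbf r$ are both unit vectors, and equivalently one could write $\|\mathbf m \times \mathbf r\|_2 = \|\mathbf m\|_2\|\mathbf r\|_2|\sin\theta| = |\sin\theta| \leq 1$, where $\theta$ is the angle between $\mathbf m(x,t)$ and $\mathbf r$. The utility of the lemma will presumably be to bound perturbation terms arising from the control $\mathbf u = k(\mathbf r - \mathbf m)$ in the forthcoming Lyapunov estimate of Theorem~\ref{thmasymstab}.
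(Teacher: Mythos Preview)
Your proof is correct and essentially matches the paper's: the paper uses the formula $\|\mathbf m \times \mathbf r\|_2 = \|\mathbf m\|_2\|\mathbf r\|_2\sin\theta$ and the unit-norm conditions to bound by $\sin\theta \leq 1$, which is exactly the alternative you mention at the end. Your primary route via Lagrange's identity is an equivalent restatement of the same geometric fact.
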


\begin{proof}
Recall $||\mathbf m \times \mathbf r||_2 = ||\mathbf m||_2||\mathbf r ||_2\sin(\theta)$ where $\theta$ is the angle between $\mathbf m$. Since $\mathbf r$ and $||\mathbf m||_2=||\mathbf r||_2=1$, then
$||\mathbf m \times \mathbf r||_2 = ||\mathbf m||_2||\mathbf r ||_2\sin(\theta) \leq \sin(\theta)\leq 1$.
\end{proof}

\begin{thm}\label{thmasymstab}
For any $\mathbf r \in E$, there exists a range of positive values of $k$ such that $\mathbf r$ is a locally asymptotically stable equilibrium point of (\ref{eqcontrolledLLphysical})  in the $\mathcal L_2^3$--norm. 
\end{thm}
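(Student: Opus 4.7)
The plan is to strengthen the Lyapunov argument of~\cite{Chow2016} into an asymptotic-stability statement by combining energy decrease with an invariance-principle argument. I take as candidate the functional
\[
V(\mathbf m)=\tfrac{1}{2}\int_0^L \|\mathbf m_x\|_2^{2}\,dx+\tfrac{k}{2}\int_0^L \|\mathbf m-\mathbf r\|_2^{2}\,dx,
\]
which is non-negative, vanishes only at $\mathbf m=\mathbf r$, and satisfies $V(\mathbf m)\geq\tfrac{k}{2}\|\mathbf m-\mathbf r\|_{\mathcal L_2^3}^{2}$, so it is positive-definite in the topology of the theorem.

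Differentiating $V$ along classical solutions is the next step. Integration by parts using the Neumann data~(\ref{eqboundarycondition}) reduces $\dot V$ to $-\int_0^L (\mathbf m_{xx}+\mathbf u)^{\mathrm T}\mathbf m_t\,dx$. Substituting~(\ref{eqcontrolledLLphysical}), the gyroscopic piece drops because $\mathbf v^{\mathrm T}(\mathbf m\times\mathbf v)=0$, while the BAC--CAB identity together with $\|\mathbf m\|_2=1$ converts the damping piece into
\[
\dot V=-\nu\int_0^L \|\mathbf m\times(\mathbf m_{xx}+\mathbf u)\|_2^{2}\,dx\leq 0.
\]
Lemmas~\ref{thmderivativemcrossmprime} and~\ref{lemmazerointegral} keep the mixed terms arising from $\mathbf u=k(\mathbf r-\mathbf m)$ manageable, while Lemma~\ref{lemupperbound1} furnishes the uniform pointwise bound needed to estimate the dissipation against a power of $V$.

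This computation recovers the stability conclusion of~\cite{Chow2016}, but since $\dot V$ also vanishes on the non-trivial set where $\mathbf m_{xx}+\mathbf u$ is parallel to $\mathbf m$, strict decrease is not immediate. To upgrade to asymptotic stability I would invoke LaSalle's invariance principle: the bound $V(\mathbf m(t))\leq V(\mathbf m_0)$ furnishes a uniform $H^{1}$-estimate on the orbit, so by Rellich--Kondrachov the trajectory is precompact in $\mathcal L_2^{3}$ and every $\omega$-limit point lies in the largest invariant subset of $\{\dot V=0\}$. On that subset the PDE collapses to $\mathbf m_t=\mathbf 0$, so its elements are equilibria of the controlled system. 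Linearising the equilibrium equation at $\mathbf r$ and decomposing the perturbation into components parallel and perpendicular to $\mathbf r$, the constraint~(\ref{eqconstraint}) forces the parallel component to be quadratically small while the perpendicular component satisfies $\boldsymbol\eta_{xx}=k\boldsymbol\eta$ with Neumann data, whose only solution for $k>0$ is zero. An implicit-function argument on the constraint manifold then delivers local uniqueness of $\mathbf r$ and pins down the admissible range of $k$.

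The main obstacle I anticipate is this final uniqueness step. Classifying the nearby equilibria must reconcile the non-self-adjoint cross-product operator with the pointwise quadratic constraint, and a naive spectral argument does not suffice; a careful perturbation analysis respecting the constraint manifold is required, and it is precisely the region where this analysis remains valid that dictates the admissible values of $k$ in the theorem.
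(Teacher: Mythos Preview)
Your route is legitimate but genuinely different from the paper's. You compute the exact identity $\dot V=-\nu\int_0^L\|\mathbf m\times(\mathbf m_{xx}+\mathbf u)\|_2^{2}\,dx$ (which, incidentally, falls out immediately from the scalar triple product and needs none of Lemmas~\ref{thmderivativemcrossmprime}--\ref{lemupperbound1}), and since this dissipation vanishes on a non-trivial set you are forced into an infinite-dimensional LaSalle argument plus a local uniqueness analysis of controlled equilibria via linearisation on the constraint manifold. The paper instead expands $\dot V$ term by term and then uses Cauchy--Schwarz together with the pointwise bound $\|\mathbf m\times\mathbf h\|_2\le 1$ of Lemma~\ref{lemupperbound1} to absorb the cross term $(\mathbf m\times\mathbf m_{xx})\cdot(\mathbf m\times\mathbf h)$ into $\|\mathbf m\times\mathbf m_{xx}\|_{\mathcal L_2^3}^{2}$; this is exactly what produces the restriction $|f(k)+k|\le 1$ (e.g.\ $k\in(0,1/2]$ when $f(k)=k$) and yields the stronger inequality $\dot V\le -\nu k f(k)\,\|\mathbf m\times\mathbf r\|_{\mathcal L_2^3}^{2}$. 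The right-hand side vanishes only when $\mathbf m\times\mathbf r=\mathbf 0$, i.e.\ $\mathbf m=\pm\mathbf r$, so on the ball $B(\mathbf r,p)$ with $p<2$ one has $\dot V<0$ away from $\mathbf r$ and the classical Lyapunov theorem applies directly---no LaSalle, no precompactness, no implicit-function step. Your approach buys a cleaner energy identity and, if the back-end can be made rigorous, would in principle work for every $k>0$; the paper's estimate is rougher and restricts $k$, but reduces the entire $\{\dot V=0\}$ analysis to three lines of elementary algebra.
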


\begin{proof}
Let $B(\mathbf r,p)=\{\mathbf m \in \mathcal L_2^3: ||\mathbf m -\mathbf r||_{ \mathcal L_2^3}<p \} \subset D$ for some constant $0<p<2$.  Note that since $p<2$, then  $-\mathbf r \notin B(\mathbf r,p)$. For any $\mathbf m \in B(\mathbf r,p)$, the Lyapunov function is 
\[
V(\mathbf m)=\frac{f(k)}{2}\left| \left| \mathbf m-\mathbf r\right|\right|_{\mathcal L_2^3}^2+\frac{1}{2}\left| \left|  \mathbf m_x\right|\right|_{\mathcal L_2^3}^2
\]
where $f(k)>0$ is a scalar function of $k$ and $|f(k)+k|\leq1$ for all $k>0$. Such functions exist. For example, $f(k)=k$ for $k\in(0,1/2]$.  

Taking the derivative of $V$,
\begin{align}
\frac{dV}{dt}&=\int_0^Lf(k)(\mathbf m -\mathbf r)^{\mathrm T}\dot{{\mathbf m}} dx+\int_0^L\mathbf m_x^{\mathrm T} \dot{{\mathbf m}}_xdx \nonumber\\
&=\int_0^Lf(k)(\mathbf m -\mathbf r)^{\mathrm T}\dot{\mathbf m} dx-\int_0^L\mathbf m_{xx}^{\mathrm T} \dot{\mathbf m}dx\nonumber\\
&=\int_0^L\left(f(k)(\mathbf m -\mathbf r)^{\mathrm T}\dot{\mathbf m} -\mathbf m_{xx}^{\mathrm T} \dot{\mathbf m}\right)dx\label{eqLyapunovFunction}
\end{align}
where the dot notation means differentiation with respect to $t$. 

Letting $\mathbf h = \mathbf m - \mathbf r$, the integrand in (\ref{eqLyapunovFunction}) becomes
\begin{equation}\label{eqintegrand}
f(k)\mathbf h^{\mathrm T}\dot{\mathbf m} - \mathbf m_{xx}^{\mathrm T} \dot{\mathbf m}
\end{equation}
and equation~(\ref{eqcontrolledLLphysical}) becomes
\[
\dot{ \mathbf m}  = \mathbf m \times \left( \mathbf m_{xx}-k\mathbf h\right)-\nu\mathbf m\times\left(\mathbf m\times \left(\mathbf m_{xx}-k\mathbf h\right)\right).
\]
It follows that 
\begin{align}
\mathbf h^{\mathrm T}\dot{\mathbf m}  = &\mathbf h^{\mathrm T}  \left[\mathbf m \times \left( \mathbf m_{xx}-k\mathbf h\right)-\nu\mathbf m\times\left(\mathbf m\times \left(\mathbf m_{xx}-k\mathbf h\right)\right)\right] \nonumber\\
=& \mathbf h^{\mathrm T}  \left(\mathbf m \times \mathbf m_{xx}\right) -k \mathbf h^{\mathrm T}\left(\mathbf m \times \mathbf h\right) \nonumber\\
&-\nu \mathbf h^{\mathrm T}\left[\mathbf m\times\left(\mathbf m\times \mathbf m_{xx}\right)\right] +\nu k  \mathbf h^{\mathrm T}\left[\mathbf m\times\left(\mathbf m\times \mathbf h\right)\right] \nonumber \\
=& \mathbf h^{\mathrm T}  \left(\mathbf m \times \mathbf m_{xx}\right) -\nu \left(\mathbf m\times \mathbf m_{xx}\right)^{\mathrm T}\left(\mathbf h\times \mathbf m\right)\nonumber\\
&+\nu k  \left(\mathbf m\times \mathbf h\right)^{\mathrm T}\left(\mathbf h\times\mathbf m\right)  \nonumber\\
=& \mathbf h^{\mathrm T}  \left(\mathbf m \times \mathbf m_{xx}\right) -\nu \left(\mathbf m\times \mathbf m_{xx}\right)^{\mathrm T}\left(\mathbf h\times \mathbf m\right) \nonumber\\
&-\nu k  ||\mathbf m\times \mathbf h||_2^2 \label{eqintegrandfirstterm}
\end{align}
 and 
\begin{align}
 \mathbf m_{xx}^{\mathrm T} \dot{\mathbf m} =&   \mathbf m_{xx}^{\mathrm T} \left[  \mathbf m \times \left( \mathbf m_{xx}-k\mathbf h\right)-\nu\mathbf m\times\left(\mathbf m\times \left(\mathbf m_{xx}-k\mathbf h\right)\right) \right] \nonumber\\
 =&  \mathbf m_{xx}^{\mathrm T} \left(  \mathbf m \times \mathbf m_{xx}\right) -k\mathbf m_{xx}^{\mathrm T} \left(  \mathbf m \times \mathbf h\right)\nonumber\\
 &-\nu\mathbf m_{xx}^{\mathrm T} \left[ \mathbf m\times\left(\mathbf m\times  \mathbf m_{xx} \right)\right]+\nu k\mathbf m_{xx}^{\mathrm T} \left[ \mathbf m\times\left(\mathbf m\times  \mathbf h\right)\right] \nonumber\\
 =&-k\mathbf m_{xx}^{\mathrm T} \left(  \mathbf m \times \mathbf h\right)-\nu\mathbf m_{xx}^{\mathrm T} \left[ \mathbf m\times\left(\mathbf m\times  \mathbf m_{xx} \right)\right] \nonumber\\
 &+\nu k\mathbf m_{xx}^{\mathrm T} \left[ \mathbf m\times\left(\mathbf m\times  \mathbf h\right)\right] \nonumber\\
 =&-k\mathbf m_{xx}^{\mathrm T} \left(\mathbf m \times \mathbf h\right)-\nu\left(\mathbf m\times  \mathbf m_{xx} \right)^{\mathrm T} \left(  \mathbf m_{xx}\times \mathbf m\right)\nonumber\\
 & +\nu k\left(\mathbf m\times  \mathbf h\right)^{\mathrm T} \left(\mathbf m_{xx} \times\mathbf m\right) \nonumber\\
  =&-k\mathbf m_{xx}^{\mathrm T} \left( \mathbf m \times \mathbf h\right)+\nu||\mathbf m\times  \mathbf m_{xx} ||_2^2 \nonumber\\
  &+\nu k\left(\mathbf m\times  \mathbf h\right)^{\mathrm T} \left(\mathbf m_{xx} \times\mathbf m\right). \label{eqintegrandsecondterm}
\end{align}

Substituting (\ref{eqintegrandfirstterm}) and (\ref{eqintegrandsecondterm}) into equation~(\ref{eqintegrand}) leads to
\begin{align*}
&f(k)\mathbf h^{\mathrm T}\dot{\mathbf m} - \mathbf m_{xx}^{\mathrm T} \dot{\mathbf m}\nonumber\\
=& f(k)\mathbf h^{\mathrm T}  \left(\mathbf m \times \mathbf m_{xx}\right) -\nu f(k) \left(\mathbf m\times \mathbf m_{xx}\right)^{\mathrm T}\left(\mathbf h\times \mathbf m\right) \nonumber\\
& -\nu kf(k)  ||\mathbf m\times \mathbf h||_2^2 +k\mathbf m_{xx}^{\mathrm T} \left(  \mathbf m \times \mathbf h\right)-\nu||\mathbf m\times  \mathbf m_{xx} ||_2^2\\
&-\nu k\left(\mathbf m\times  \mathbf h\right)^{\mathrm T} \left(\mathbf m_{xx} \times\mathbf m\right)\\
=& (f(k)-k)\mathbf h^{\mathrm T}  \left(\mathbf m \times \mathbf m_{xx}\right) -\nu f(k) \left(\mathbf m\times \mathbf m_{xx}\right)^{\mathrm T}\left(\mathbf h\times \mathbf m\right) \\
& -\nu kf(k)  ||\mathbf m\times \mathbf h||_2^2 -\nu||\mathbf m\times  \mathbf m_{xx} ||_2^2\\
&-\nu k\left(\mathbf m\times  \mathbf h\right)^{\mathrm T} \left(\mathbf m_{xx} \times\mathbf m\right)\\
=& (f(k)-k)\mathbf h^{\mathrm T}  \left(\mathbf m \times \mathbf m_{xx}\right) \\
&-\nu (f(k)+k) \left(\mathbf m\times \mathbf m_{xx}\right)^{\mathrm T}\left(\mathbf h\times \mathbf m\right)\\
&-\nu kf(k)  ||\mathbf m\times \mathbf h||_2^2 -\nu||\mathbf m\times  \mathbf m_{xx} ||_2^2
\end{align*}
Substituting this expression into equation~(\ref{eqLyapunovFunction}) leads to
\begin{align*}
\frac{dV}{dt}  =&(f(k)-k) \int_0^L\mathbf h^{\mathrm T}  \left(\mathbf m \times \mathbf m_{xx}\right)dx\\
&-\nu (f(k)+k)  \int_0^L \left(\mathbf m\times \mathbf m_{xx}\right)^{\mathrm T}\left(\mathbf m\times \mathbf h\right) dx\\
& - \nu kf(k)\int_0^L  ||\mathbf m\times \mathbf h||_2^2 dx-\nu \int_0^L ||\mathbf m\times  \mathbf m_{xx} ||_2^2 dx.
\end{align*}
From Lemma~\ref{lemmazerointegral}, the first integral equals zero since $\mathbf h = \mathbf m -\mathbf r$. It follows that 
\begin{align*}
\frac{dV}{dt}  =&-\nu (f(k)+k)  \int_0^L \left(\mathbf m\times \mathbf m_{xx}\right)^{\mathrm T}\left(\mathbf m\times \mathbf h\right) dx\\
& - \nu kf(k)\int_0^L  ||\mathbf m\times \mathbf h||_2^2 dx-\nu \int_0^L ||\mathbf m\times  \mathbf m_{xx} ||_2^2 dx\\
=& -\nu (f(k)+k) \int_0^L \left(\mathbf m\times \mathbf m_{xx}\right)^{\mathrm T}\left(\mathbf m\times \mathbf h\right) dx \\
&- \nu kf(k) ||\mathbf m\times \mathbf h||_{\mathcal L_2^3}^2-\nu  ||\mathbf m\times  \mathbf m_{xx} ||_{\mathcal L_2^3}^2.
\end{align*}
Applying the Cauchy-Schwarz Inequality to the integrand leads to
\begin{align*}
\frac{dV}{dt} \leq &\nu |f(k)+k| \int_0^L ||\mathbf m\times \mathbf m_{xx}||_2||\mathbf m\times \mathbf h||_2 dx \\
&- \nu kf(k) ||\mathbf m\times \mathbf h||_{\mathcal L_2^3}^2-\nu  ||\mathbf m\times  \mathbf m_{xx} ||_{\mathcal L_2^3}^2.
\end{align*}
Since $||\mathbf m\times \mathbf h||_2\leq 1$ from  Lemma~\ref{lemupperbound1}, then
\begin{align*}
\frac{dV}{dt} \leq &\nu |f(k)+k| \int_0^L ||\mathbf m\times \mathbf m_{xx}||_2dx - \nu kf(k) ||\mathbf m\times \mathbf h||_{\mathcal L_2^3}^2\\
&-\nu  ||\mathbf m\times  \mathbf m_{xx} ||_{\mathcal L_2^3}^2\\
=&\nu |f(k)+k| \,  ||\mathbf m\times \mathbf m_{xx}||_{\mathcal L_2^3}^2 - \nu kf(k) ||\mathbf m\times \mathbf h||_{\mathcal L_2^3}^2\\
&-\nu  ||\mathbf m\times  \mathbf m_{xx} ||_{\mathcal L_2^3}^2\\
=&\nu \left(|f(k)+k|-1\right) \,  ||\mathbf m\times \mathbf m_{xx}||_{\mathcal L_2^3}^2 - \nu kf(k) ||\mathbf m\times \mathbf h||_{\mathcal L_2^3}^2.
\end{align*}

Since $|f(k)+k|\leq1$, then
\begin{align}\label{eqdervativeofLyapunovfinal}
\frac{dV}{dt} &\leq - \nu kf(k) ||\mathbf m\times \mathbf h||_{\mathcal L_2^3}^2.
\end{align}
Since $k>0$ and $f(k)>0$, then $\frac{dV}{dt} \leq 0$ with $\frac{dV}{dt} =0$ if and only if $\mathbf m\times \mathbf h=\mathbf0$.  For example, suppose $f(k)=k$ on $k\in(0,1/2]$, which satisfies  $|f(k)+k|\leq1$. Equation~(\ref{eqdervativeofLyapunovfinal}) becomes
 \[
 \frac{dV}{dt} \leq - \nu k^2 ||\mathbf m\times \mathbf h||_{\mathcal L_2^3}^2
 \]
 which is clearly less than or equal to zero, and the value of $k$ can be any number in the interval $(0,1/2]$. For instance, picking $k=1/4$, the Lyapunov function is
 \[
 V(\mathbf m)=\frac{1}{8}\left| \left| \mathbf m-\mathbf r\right|\right|_{\mathcal L_2^3}^2+\frac{1}{2}\left| \left|  \mathbf m_x\right|\right|_{\mathcal L_2^3}^2.
 \]

Revisiting equation (\ref{eqdervativeofLyapunovfinal}), if $\mathbf m =\mathbf r$, then $\mathbf h=\mathbf 0$ and hence $dV/dt=0$. On the other hand, $dV/dt=0$ implies $\mathbf m\times  \mathbf h=\mathbf 0,$ and hence $\mathbf r\times  \mathbf m=\mathbf 0.$ This is a system of algebraic equations,
\begin{align*}
r_2m_3-r_3m_2&=0\\
r_3m_1-r_1m_3&=0\\
r_1m_2-r_2m_1&=0.
\end{align*}
The solution is $m_2=\displaystyle\frac{r_2}{r_1}m_1$ and $m_3=\displaystyle\frac{r_3}{r_1}m_1$ for any $m_1$ and $r_1\neq0$.  Given  $||\mathbf m||_2=1$ and $||\mathbf r||_2=1$, this leads to  $m_1^2=r_1^2$ and hence $m_1=\pm r_1$, which leads to $m_2=\pm r_2$ and $m_3=\pm r_3$; that is, $\mathbf m = \pm \mathbf r$.  For $V(\mathbf m)$ on $ B(\mathbf r,p)$, this implies $\mathbf m=\mathbf r$ if $dV/dt=0$. Local  asymptotic stability  follows from Lyapunov's Theorem \cite[Theorem~6.2.1]{Michel1995}.
\end{proof}

\section{Discussions}
Asymptotic stability of an arbitrary equilibrium point of the Landau--Lifshitz equation with Neumann boundary conditions is shown in Theorem~\ref{thmasymstab}. This is established using Lyapunov theory. The result in Theorem~\ref{thmasymstab} is an extension of the work presented in \cite{Chow2016}.  

The control given in (\ref{equ}) can be used to control  the hysteresis that often appears in magnetization dynamics including those described by the Landau--Lifshitz equation \cite{Chow2013_thesis,Chow2014_ACC}. Figure~\ref{fighysteresisLLphysicalcontrol} depicts the input-output map of the Landau--Lifshitz equation in (\ref{eqcontrolledLLphysical}). The output is the magnetization, $\mathbf m(x,t)=(m_1(x,t),m_2(x,t),m_2(x,t))$, and the input is a periodic function denoted $\mathbf{\hat{u}}(t)$ where $\omega$ is the frequency of this input. For each $m_i$ with $i=1,2,3$, the input is the periodic function $0.01\cos(\omega t)$. For this periodic input, equation (\ref{eqcontrolledLLphysical}) becomes
\[
\frac{\partial \mathbf m}{\partial t}  = \mathbf m \times \left( \mathbf m_{xx}+\mathbf u\right)-\nu\mathbf m\times\left(\mathbf m\times \left(\mathbf m_{xx}+\mathbf u\right)\right) +\mathbf{\hat{u}}(t).
\]
 As the frequency of the (periodic) input approaches zero, loops appear in the input--output map for $m_1(x,t)$, which indicates the presence of hysteresis \cite{Bernstein2005}.

\begin{figure}
\begin{tabular}{ccc}
\hspace*{-0.75cm} \includegraphics[height=3.6cm]{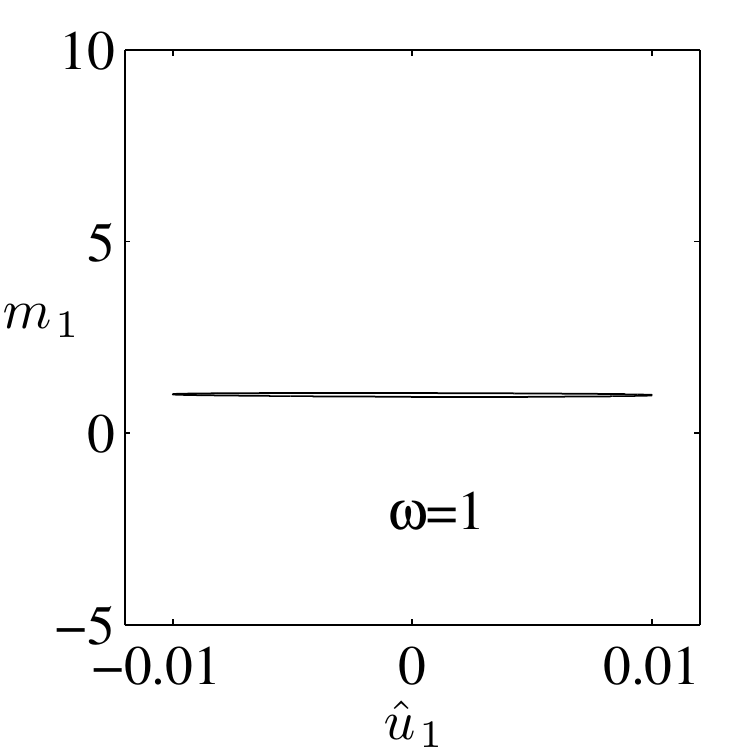}\hspace*{-0.6cm} &
  \includegraphics[height=3.6cm]{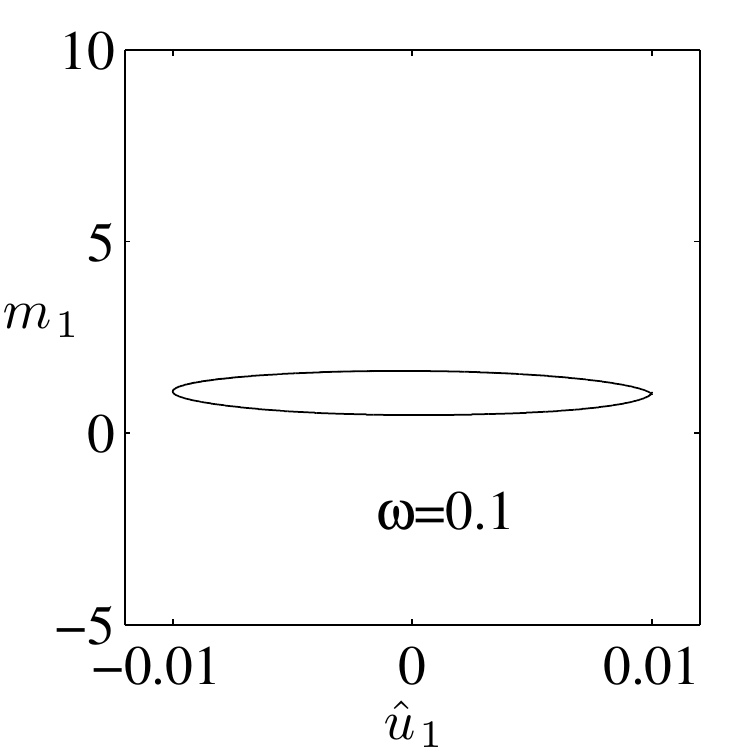}\hspace*{-0.6cm} &
   \includegraphics[height=3.6cm]{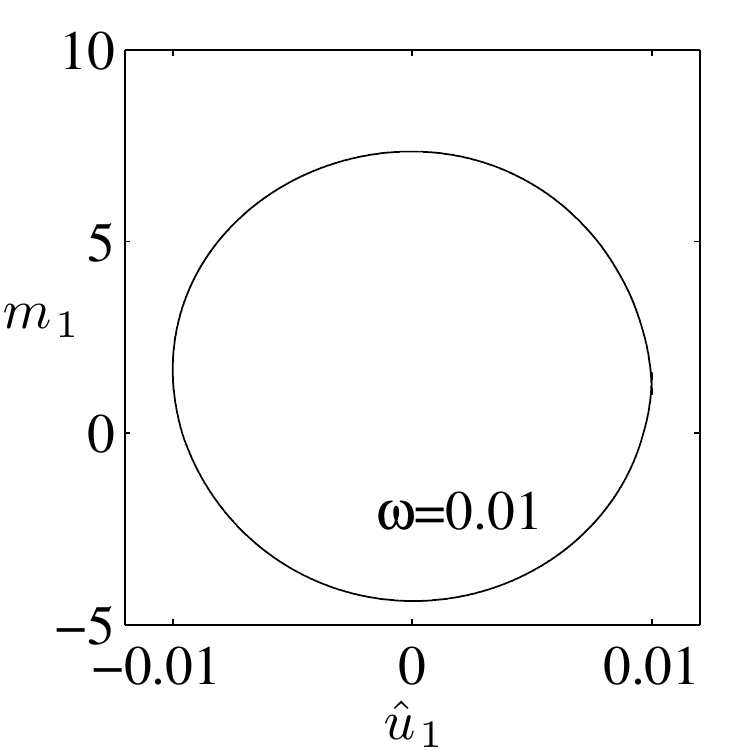}\vspace*{-0.2cm}\\
   \small{(a)}&\small{(b)}&\small{(c)}\\
 \hspace*{-0.75cm}  \includegraphics[height=3.6cm]{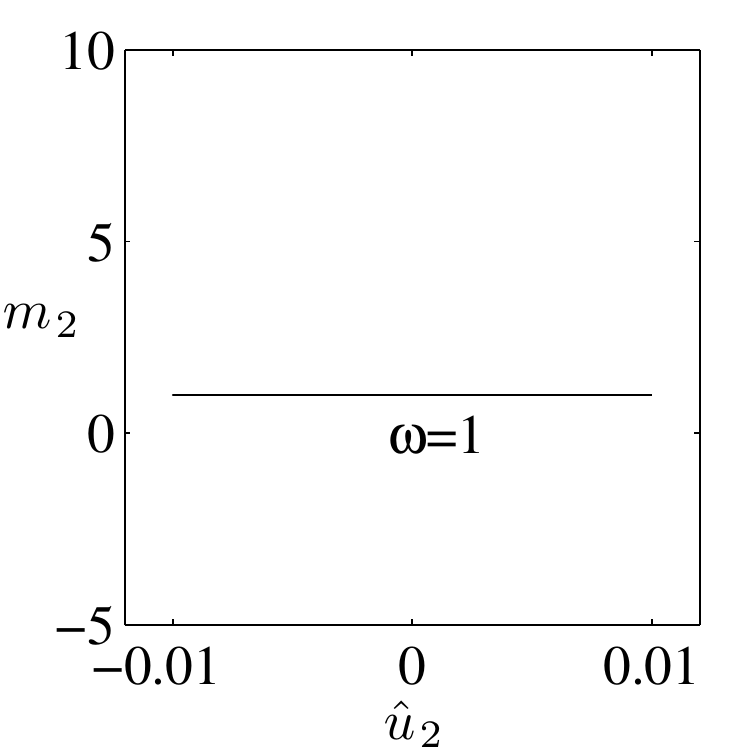}\hspace*{-0.6cm} &
  \includegraphics[height=3.6cm]{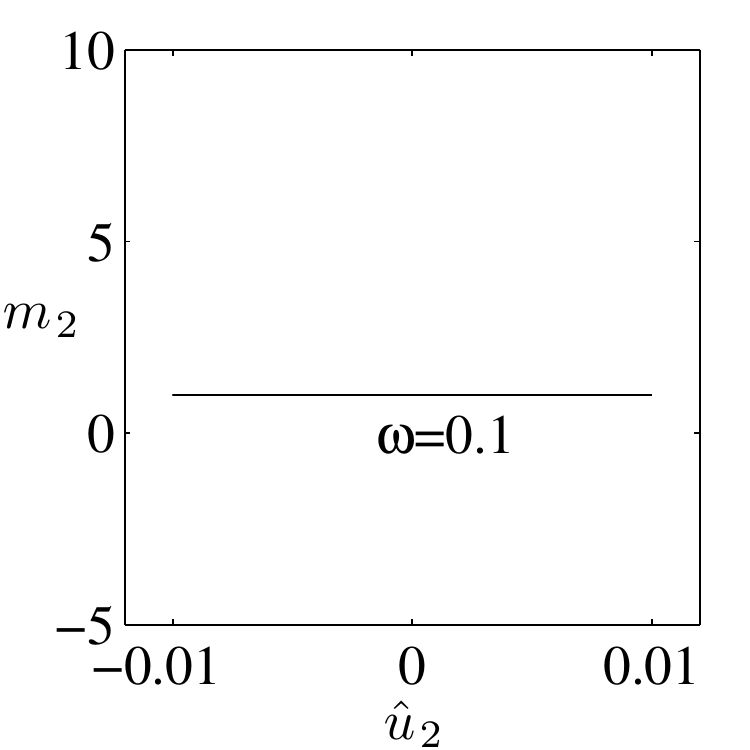}\hspace*{-0.6cm} &
   \includegraphics[height=3.6cm]{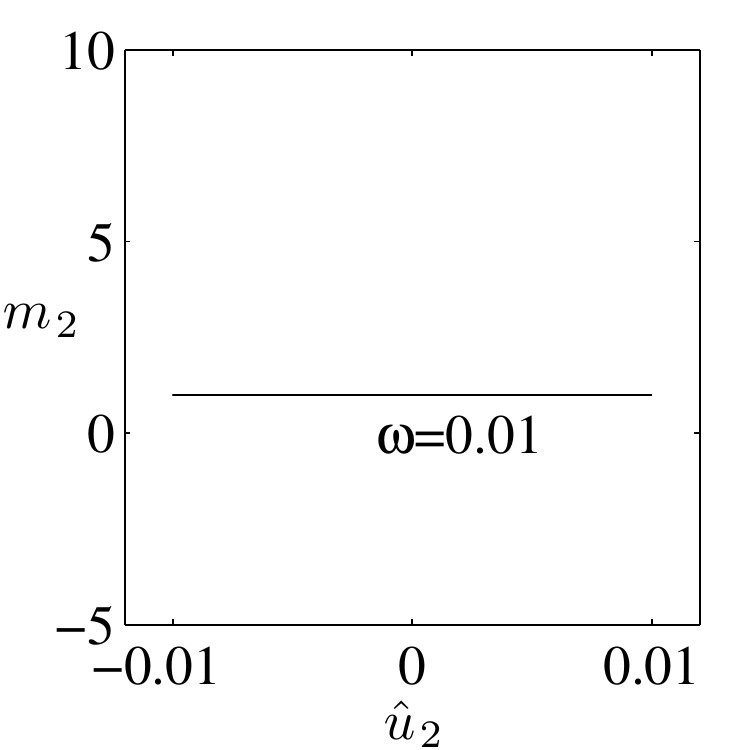}\vspace*{-0.2cm}\\
   \small{(d)}&\small{(e)}&\small{(f)}\\
    \hspace*{-0.75cm}  \includegraphics[height=3.6cm]{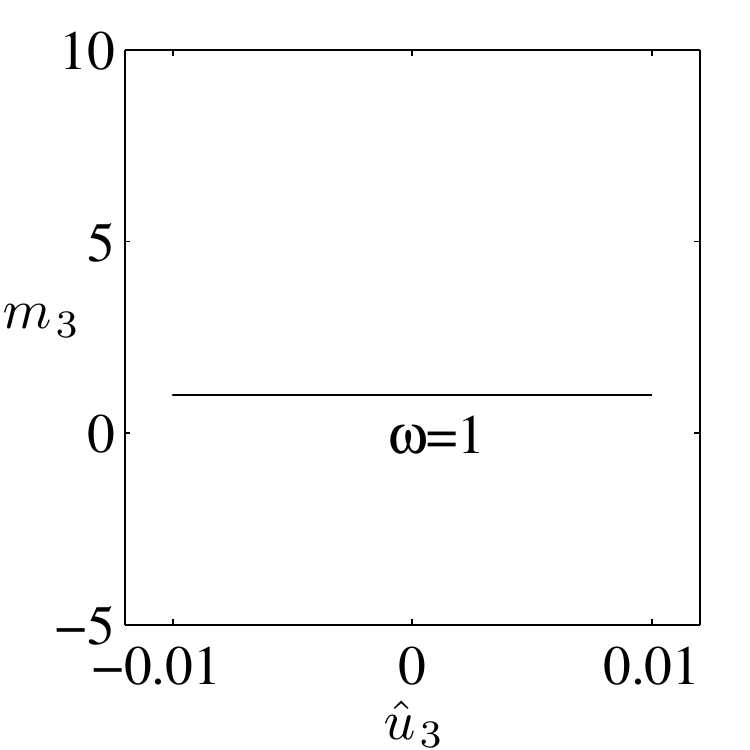}\hspace*{-0.6cm} &
  \includegraphics[height=3.6cm]{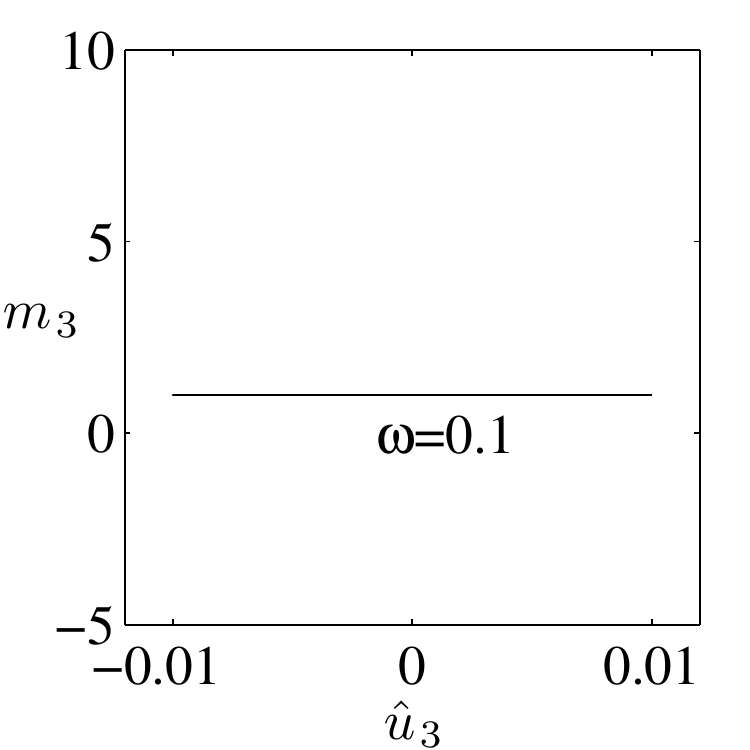}\hspace*{-0.6cm} &
   \includegraphics[height=3.6cm]{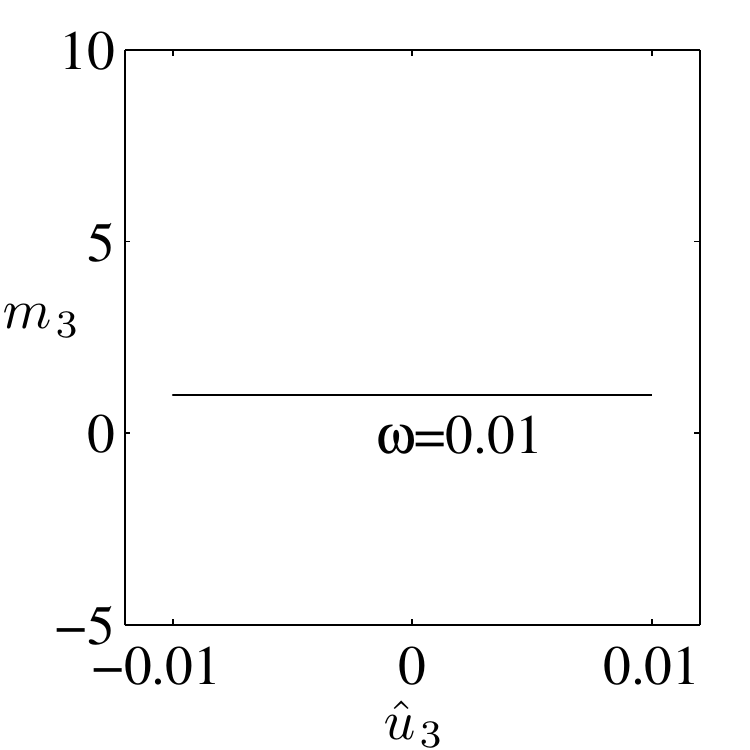}\vspace*{-0.2cm}\\
   \small{(g)}&\small{(h)}&\small{(i)}\\
\end{tabular}
{\caption{\label{fighysteresisLLphysicalcontrol}  Input--output curves for the  Landau-Lifshitz equation described in (\ref{eqcontrolledLLphysical}) to (\ref{eqconstraint}). Parameters are set to $L=1$, $\nu=0.02$ and $x=1$ with periodic input $0.01\cos(\omega t)$ for each output $m_i(x,t)$, $i=1,2,3$ and frequencies $\omega=1,\, 0.1,\, 0.01$. (a)--(c)  shows $m_1(x,t)$ with initial magnetization $\mathbf m_0(x)=\left(1,0,0\right),$ input $\mathbf{\hat{u}}(t)=(0.01\cos(\omega t),0,0)$,  and $\mathbf r(t)=(1,0,0)$; (d)--(f) shows $m_2(x,t)$  with initial magnetization  $\mathbf m_0(x)=\left(0,1,0\right)$, input $\mathbf{\hat{u}}(t)=(0,0.01\cos(\omega t),0)$ and $\mathbf r(t)=(0,1,0)$; (g)--(i)  shows $m_3(x,t)$ with initial magnetization  $\mathbf m_0(x)=\left(0,0,1\right),$ input $\mathbf{\hat{u}}(t)=(0,0,0.01\cos(\omega t))$, and $\mathbf r(t)=(0,0,1)$. }}
\end{figure}

Because hysteresis is characterized by multiple stable equilibrium points \cite{Chow2014_ACC,Morris2011}, the ability to control the stability of equilibrium points implies the ability to control hysteresis. Such a control for the Landau--Lifshitz equation is given in (\ref{equ}). This is a possible avenue for future exploration.

\bibliographystyle{plain}
\bibliography{ref}

\end{document}